\documentclass[reqno, 11pt]{amsart}
\usepackage{mathtools}
\usepackage{amsmath}
\usepackage{amssymb}
\usepackage{yhmath}
\usepackage{graphicx}
\usepackage{ mathrsfs }
\usepackage{bbm}
\usepackage{xcolor}
\usepackage{tikz-cd}
\usepackage{tikz}
\usetikzlibrary{patterns}
\usepackage{hyperref}

\setcounter{tocdepth}{1}
\DeclareMathAlphabet{\mathpzc}{OT1}{pzc}{m}{it}

\usepackage{thmtools}
\usepackage{thm-restate}

\usepackage{caption}

\newtheorem{theorem}{Theorem}[section]

\newtheorem*{claim*}{Claim}

\newtheorem{lemma}[theorem]{Lemma}

\newtheorem{proposition}[theorem]{Proposition}

\newtheorem{prop}[theorem]{Proposition}%https://www.overleaf.com/project/616d911a46ea936b949fdb96

\theoremstyle{definition}

\newtheorem{Def}[theorem]{Definition}

\theoremstyle{remark}

\newtheorem{Rmk}[theorem]{Remark}

\numberwithin{equation}{section}

%    Absolute value notation

%    Blank box placeholder for figures (to avoid requiring any
%    particular graphics capabilities for printing this document).
\newcommand{\op}{\operatorname}

\newcommand{\be}{\begin{equation}}
\newcommand{\ee}{\end{equation}}
\newcommand{\Ga}{\Gamma}

\newcommand{\N}{\mathbb N}
\newcommand{\ga}{\gamma}

\newcommand{\La}{\Lambda}

\newcommand{\cal}{\mathcal}

\newcommand{\SO}{\op{SO}}

\newcommand{\F}{\cal F}

\newcommand{\G}{\Gamma}

\renewcommand{\frak}{\mathfrak}

\newcommand{\fa}{\mathfrak a}

\renewcommand{\i}{\op{i}}

\newcommand{\Lie}{\op{Lie}}

\begin{document}

\title[Non-concentration property of Patterson-Sullivan measures]{Non-concentration property of Patterson-Sullivan measures for\\Anosov subgroups}

\author{Dongryul M. Kim}
\address{Department of Mathematics, Yale University, New Haven, CT 06511}
\email{dongryul.kim@yale.edu}

\author{Hee Oh}
\address{Department of Mathematics, Yale University, New Haven, CT 06511}
\email{hee.oh@yale.edu}
\thanks{
 Oh is partially supported by the NSF grant No. DMS-1900101.}
\begin{abstract}
Let $G$ be a connected semisimple real algebraic group.  For a Zariski dense Anosov subgroup $\Gamma<G$ with respect to a parabolic subgroup $P_\theta$, we prove that any $\Gamma$-Patterson-Sullivan  measure charges no mass on any proper subvariety of $G/P_\theta$. 
More generally, we prove that for a Zariski dense $\theta$-transverse subgroup $\Gamma<G$, any $(\Gamma, \psi)$-Patterson-Sullivan measure charges no mass on any proper subvariety of $G/P_\theta$, provided the $\psi$-Poincar\'e series of $\Gamma$ diverges at $s=1$. In particular, our result also applies to relatively Anosov subgroups.
\end{abstract}

\maketitle
%\tableofcontents

\section{Introduction}
 Let $G$ be a connected semisimple real algebraic group and $\frak g=\Lie G$.
Let $A$ be a maximal real split torus of $G$ and set
$\mathfrak{a} = \Lie A$.
Fix a positive Weyl chamber $\fa^+<\fa$ and
  a maximal compact subgroup $K< G$ such that the Cartan decomposition $G=K (\exp \fa^+) K$ holds.
 We denote by $\mu(g)\in \fa$ the  Cartan projection of $g\in G$, that is, the unique element of $\fa^+$ such that $g\in K \exp(\mu(g))K$.
  Let $\Pi$ be the set of simple roots for $(\frak g, \frak a^+)$ and fix a non-empty subset $\theta\subset \Pi$. Let $P_\theta$ be the standard parabolic subgroup of $G$ corresponding to $\theta$ and set $$\F_\theta=G/P_\theta.$$

Let $\Ga<G$ be a Zariski dense discrete subgroup. 
Denote by $\La_\theta  \subset \F_{\theta}$ the limit set of $\Ga$, which is the unique $\Ga$-minimal subset of $\F_\theta$ \cite{Benoist1997proprietes}.
Let $\mathfrak{a}_\theta =\bigcap_{\alpha \in \Pi - \theta} \ker \alpha$.
For a linear form $\psi \in \fa_\theta^*$, a Borel probability measure $\nu$ on $\mathcal{F}_\theta$ is called a $(\Gamma, \psi)$-conformal measure if $$\frac{d \gamma_*\nu}{d\nu}(\xi)=e^{\psi(\beta_\xi^\theta(e,\gamma))} \quad \text{for all $\gamma \in \Gamma$ and $ \xi \in \mathcal{F}_\theta$} $$
 where $\ga_* \nu(B) = \nu(\ga^{-1}B)$ for any Borel subset $B\subset \F_\theta$ and $\beta_\xi^\theta$ denotes the $\fa_\theta$-valued Busemann map defined in \eqref{Bu}. By a $\Ga$-Patterson-Sullivan measure on $\F_\theta$, we mean 
 a $(\Ga,\psi)$-conformal measure supported on $\La_\theta$ for some $\psi \in \fa_{\theta}^*$. 
 
 Patterson-Sullivan measures play a fundamental role in the study of geometry and dynamics for $\Ga$-actions. For $G$ of rank one, they were  constructed by Patterson and Sullivan for any non-elementary discrete subgroup $\Ga$ of $G$ (\cite{Patterson1976limit}, \cite{Sullivan1979density}), and hence the name. Their construction was generalized by Quint for any Zariski dense subgroup of a semisimple real algebraic group \cite{Quint2002Mesures}.

A finitely generated subgroup $\Ga<G$ is called a $\theta$-Anosov subgroup if there exist $C_1, C_2>0$ such that for all $\ga\in \Ga$ and $\alpha\in \theta$,
$$\alpha(\mu(\ga))\ge C_1|\ga| -C_2$$  where $|\ga|$ denotes the word length of $\ga$ with respect to a fixed finite generating set of $\Ga$.
  A $\theta$-Anosov subgroup is necessarily a word hyperbolic group  \cite[Theorem 1.5, Corollary 1.6]{Kapovich2018morse}.
 The notion of Anosov subgroups was first introduced by Labourie for surface groups \cite{Labourie2006anosov}, and was extended to general word hyperbolic groups by Guichard-Wienhard \cite{Guichard2012anosov}.
Several equivalent characterizations have been  established, one of which is the above definition (see \cite{Gueritaud2017anosov} \cite{Kapovich2018discrete} \cite{Kapovich2017anosov} \cite{Kapovich2018morse}). Anosov subgroups are regarded as natural generalizations of convex cocompact subgroups of rank one groups, and include the images of Hitchin representations and of maximal representations as well as higher rank Schottky subgroups; see (\cite{Wienhard_ICM}, \cite{Kassel_ICM}).

A special case of our main theorem is the following non-concentration property of Patterson-Sullivan measures for $\theta$-Anosov subgroups:
\begin{theorem} \label{main.Anosov}
    Let $\Ga<G$ be a Zariski dense $\theta$-Anosov subgroup. For any
    $\Ga$-Patterson-Sullivan measure $\nu$ on $\F_\theta$, we have
    $$\nu(S)=0$$
    for any proper subvariety $S$ of $ \F_\theta$.
\end{theorem}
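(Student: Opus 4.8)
The plan is to argue by contradiction, combining a minimality principle with the conformality of $\nu$ and the divergence of the Poincar\'e series. Suppose some proper subvariety of $\F_\theta$ has positive $\nu$-measure, let $d\ge 0$ be the least dimension of such a subvariety, and among proper subvarieties of dimension $d$ with positive $\nu$-mass let $D$ be the least degree with respect to a fixed $G$-equivariant projective embedding $\F_\theta\hookrightarrow\P(V)$ (so that $G$ acts by projective linear maps and hence preserves degree of subvarieties). Let $\mathcal S$ be the family of all subvarieties of $\F_\theta$ of dimension $d$ and degree $D$ with positive $\nu$-mass. First I would record the basic properties of $\mathcal S$: each member is irreducible (a reducible member of degree $D$ would have an irreducible component of positive measure, of dimension $d$ by minimality of $d$ and degree $<D$, contradicting minimality of $D$); distinct members $S\ne S'$ meet in a variety of dimension $<d$, so $\nu(S\cap S')=0$ by minimality of $d$, whence $\mathcal S$ is countable since $\sum_{S\in\mathcal S}\nu(S)\le 1$; and $\mathcal S$ is $\Ga$-invariant, since for $\ga\in\Ga$ and $S\in\mathcal S$ the set $\ga S$ again has dimension $d$ and degree $D$, while $\nu(\ga S)=\int_S \tfrac{d\ga^{-1}_*\nu}{d\nu}\,d\nu>0$ because the Radon--Nikodym derivative $e^{\psi(\beta^\theta_{\cdot}(e,\ga^{-1}))}$ is everywhere positive. (The case $d=0$ is exactly non-atomicity of $\nu$.)

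The \emph{heart of the matter} is then to derive a contradiction from the existence of this nonempty $\Ga$-invariant family $\mathcal S$. I would first dispose of the small-stabilizer case. Fix $S\in\mathcal S$ and suppose $\Stab_\Ga(S)=\{\ga\in\Ga:\ga S=S\}$ is finite; since $\Stab_\Ga(S)\subset\Stab_G(S)$, which is a proper algebraic subgroup of $G$ (as $G$ acts transitively on $\F_\theta$ and $S\ne\F_\theta$), this is the generic situation. The distinct translates $\ga S$, with $\ga$ ranging over coset representatives for $\Ga/\Stab_\Ga(S)$, are pairwise $\nu$-essentially disjoint members of $\mathcal S$, so
$$\sum_{\ga\in\Ga}\nu(\ga^{-1}S)=|\Stab_\Ga(S)|\cdot\!\!\!\!\sum_{[\ga]\in\Ga/\Stab_\Ga(S)}\!\!\!\!\nu(\ga^{-1}S)\ \le\ |\Stab_\Ga(S)|\ <\ \infty .$$
On the other hand, conformality gives $\nu(\ga^{-1}S)=\int_S e^{\psi(\beta^\theta_\xi(e,\ga))}\,d\nu(\xi)$, and the standard Busemann estimate---$\|\beta^\theta_\xi(e,\ga)+\mu_\theta(\ga^{-1})\|$ is bounded uniformly over $\xi$ in general position relative to the repelling point of $\ga^{-1}$ in $\F_{\theta^\star}$---shows, since $S$ is not contained in the proper non-transversality variety of that point for all but a sparse set of $\ga$, that $\nu(\ga^{-1}S)\gg \nu(S)\,e^{-\psi^{\i}(\mu_\theta(\ga))}$ for $\ga$ in a subset of $\Ga$ whose $\psi^{\i}$-Poincar\'e series still diverges at $s=1$ (by hypothesis, and automatically for $\theta$-Anosov $\Ga$). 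This forces $\sum_\ga\nu(\ga^{-1}S)=\infty$, a contradiction. It then remains to treat $S\in\mathcal S$ with $\Stab_\Ga(S)$ infinite: such $S$ is invariant under a loxodromic element $\ga\in\Ga$, and I would use that the attracting and repelling data of $\ga$ are transverse, that the non-transversality variety $V^-(\ga)\subsetneq\F_\theta$ satisfies $\nu(V^-(\ga))<1$ since $\La_\theta=\op{supp}\nu$ is Zariski dense in $\F_\theta$, and north--south dynamics of $\ga^n$ on $\F_\theta$, to analyze $\ga^n_*\nu$ and $\nu|_S$; concretely I would either induct on $d$ via the action of $\Stab_\Ga(S)$ on $S$ (whose limit set lies in $S\cap\La_\theta$), or push $\nu$ forward under the $\Ga$-equivariant measurable map $\xi\mapsto[S_\xi]$, where $S_\xi$ is the $\nu$-a.e.\ unique member of $\mathcal S$ through $\xi$, to the Chow variety $\op{Chow}_{d,D}(\F_\theta)$ and exploit the dynamics of $\ga$ there against Zariski density. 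Once $\mathcal S$ is known to be finite, $\Ga$ permutes it, a finite-index---hence still Zariski dense---subgroup $\Ga'$ fixes some $S\in\mathcal S$, so $\Ga'\subset\Stab_G(S)\subsetneq G$, contradicting Zariski density and finishing the proof.

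The main obstacle is the last case above: ruling out positive-measure minimal subvarieties with infinite $\Ga$-stabilizer, and, within the small-stabilizer case, making precise that the ``bad'' elements $\ga$ (for which $S$ fails to be in general position relative to the repelling flag of $\ga^{-1}$) are sufficiently sparse not to destroy divergence of the Poincar\'e series. Both points require a careful interplay between the geometry of $\La_\theta$ inside $\F_\theta$, the structure of proper algebraic subgroups containing infinite subgroups of $\Ga$, and the asymptotics of the Busemann cocycle; it is precisely the Anosov hypothesis---which realizes $\La_\theta$ as a H\"older embedding of $\p\Ga$ on which $\Ga$ acts as a uniform convergence group---that should make these estimates tractable.
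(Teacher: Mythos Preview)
Your proposal has genuine gaps that you yourself flag at the end: the infinite-stabilizer case is not resolved (the sketches via induction on $d$ or via Chow varieties are only heuristics, and it is unclear how to run an induction when $\Stab_\Ga(S)$ need not be Zariski dense or even $\theta$-Anosov in its Zariski closure), and in the finite-stabilizer case the lower bound $\nu(\ga^{-1}S)\gg \nu(S)\,e^{-\psi^{\i}(\mu_\theta(\ga))}$ is not established. The latter requires a uniform lower bound on $\psi(\beta^\theta_\xi(e,\ga^{-1}))$ over $\xi\in S\cap\La_\theta$, which in turn needs $S\cap\La_\theta$ to stay uniformly transverse to the repelling flag of $\ga^{-1}$; controlling the set of ``bad'' $\ga$ so that divergence survives is a serious problem, not a routine estimate.

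The paper's route is quite different and avoids both obstacles. It does not use the Poincar\'e series directly in the non-concentration argument at all. Instead, the divergence hypothesis is fed into the Canary--Zhang--Zimmer theorem to obtain two purely dynamical properties of $\nu$: non-atomicity, and ergodicity of the diagonal $\Ga$-action on $(\La_\theta^{(2)},\nu\times\nu_{\psi\circ\i})$. From there the argument is soft. One passes to a torsion-free finite-index subgroup and uses that $\Ga$ acts on $\La_{\theta\cup\i(\theta)}$ as a convergence group. The key lemma (a general fact about convergence group actions) says: if $W\subset X$ is compact with $\#W\ge 2$, then $\Ga_W=\{\ga:\ga W=W\}$ acts properly discontinuously on $X\setminus W$. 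Taking $S$ irreducible of minimal dimension with $\nu(S)>0$ and $W=f_\theta^{-1}(S\cap\La_\theta)$, one finds a loxodromic $\ga_0$ with attractor in $\La_X\setminus W$, and open sets $V\subset U\subset\La_X\setminus W$ with $\ga_0V\subset U$, $\ga_0V\cap V=\emptyset$, and $\ga U\cap U=\emptyset$ for every nontrivial $\ga\in\Ga_W$. Ergodicity of $\nu\times\nu_{\i}$ then produces $\ga\in\Ga$ with $\nu(S\cap\ga S)>0$ (forcing $\ga S=S$ by minimality, so $\ga\in\Ga_W$) and simultaneously $\nu_{\i}(f_{\i(\theta)}(\ga_0V\cap\ga V))>0$; but $\ga_0V\cap\ga V\subset U\cap\ga U=\emptyset$, a contradiction. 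Note that there is no case split on the size of $\Stab_\Ga(S)$ and no Busemann estimate: the ergodicity on the \emph{product} does all the work.
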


\begin{Rmk}\rm 
This  was proved by Flaminio-Spatzier \cite{Flaminio1990geometrically} for $G=\SO(n,1)$, $n\ge 2$, and
by Edwards-Lee-Oh  \cite{edwards2022torus} when $\theta=\Pi$ and the opposition involution of $G$ is trivial \eqref{eqn.opposition}.
\end{Rmk}

Indeed,  we work with a more general class of discrete subgroups, called $\theta$-transverse subgroups. Denote by $\i$ the opposition involution of $G$ (see \eqref{eqn.opposition}).
\begin{Def} \label{tra} A discrete subgroup $\Ga < G$ is called \emph{$\theta$-transverse} if
\begin{itemize}
    \item   it is \emph{$\theta$-regular}, i.e.,
$ \liminf_{\ga\in \Ga} \alpha(\mu({\ga}))=\infty $ for all $\alpha\in \theta$; and
\item it is  \emph{$\theta$-antipodal}, i.e.,
if any two distinct $\xi, \eta\in \La_{\theta\cup \i(\theta)}$ 
are in general position.
\end{itemize}
\end{Def}
Since $\i (\mu( g))=\mu(g^{-1}) $ for all $g\in G$, it follows that $\Ga$ is $\theta$-transverse if and only if $\Ga$ is $\i(\theta)$-transverse. The class of $\theta$-transverse subgroups includes all discrete subgroups of rank one Lie groups, $\theta$-Anosov subgroups and relatively $\theta$-Anosov subgroups.

Let $p_{\theta} : \fa \to \fa_{\theta}$ be the projection  which is invariant under all Weyl elements fixing $\fa_{\theta}$ pointwise, and set $\mu_\theta=p_\theta\circ \mu$.
A linear form $\psi \in \fa_{\theta}^*$ is said to be $(\Ga, \theta)$-proper if the composition $\psi \circ  \mu_\theta  : \Ga \to [-\varepsilon, \infty)$ is a proper map  for some $\varepsilon > 0$. 
The  following is our main theorem from which Theorem \ref{main.Anosov} is deduced by applying Selberg's lemma \cite{Selberg1960discontinuous}.
\begin{theorem} \label{main} 
    Let $\Ga<G$ be a Zariski dense virtually torsion-free $\theta$-transverse subgroup.  Let $\psi \in \fa_{\theta}^*$ be a $(\Ga, \theta)$-proper linear form  such that $\sum_{\ga \in \Ga} e^{-\psi(\mu_{\theta}(\ga))} = \infty$.
    For any  $(\Ga, \psi)$-Patterson-Sullivan measure $\nu$ on $\F_{\theta}$, we have
    $$\nu(S)=0$$
    for any proper subvariety $S$ of $ \F_\theta$.
\end{theorem}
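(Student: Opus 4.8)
The strategy is to pass to a torsion-free subgroup and then argue by contradiction with a $\nu$-positive proper subvariety of minimal dimension, the decisive input being that divergence of the $\psi$-Poincar\'e series forces $\nu$-almost every point to be a conical limit point.

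\emph{Reductions, tools, and the atomic case.} By Selberg's lemma $\Ga$ has a finite-index torsion-free subgroup $\Ga'$; one checks that $\Ga'$ is again Zariski dense and $\theta$-transverse, that $\La_\theta(\Ga')=\La_\theta(\Ga)$, that $\psi$ stays $(\Ga',\theta)$-proper, that $\sum_{\ga\in\Ga'}e^{-\psi(\mu_\theta(\ga))}=\infty$, and that $\nu$ is a $(\Ga',\psi)$-Patterson--Sullivan measure; since being $\nu$-null is unaffected, we may assume $\Ga$ torsion-free. Three facts will be used. (i) Since $\Ga$ is Zariski dense, $\La_\theta$ is Zariski dense in $\F_\theta$ (its Zariski closure is $\Ga$-, hence $G$-, hence everything), and likewise $\La_\theta(\Ga_1)$ is Zariski dense for any Zariski dense $\theta$-regular subgroup $\Ga_1$; in particular no proper subvariety contains such a limit set. (ii) The generalized Sullivan shadow lemma for $(\Ga,\psi)$-conformal measures, together with $\sum_\ga e^{-\psi(\mu_\theta(\ga))}=\infty$, yields that $\nu$-a.e.\ $\xi$ is conical, and for such $\xi$ there is a sequence $\ga_n\to\xi$ along a $\theta$-regular direction with $\xi$ in the shadow of $\ga_n$, so that $\psi(\beta_\xi^\theta(e,\ga_n))=\psi(\mu_\theta(\ga_n))+O(1)\to+\infty$ (using that $\psi$ is $(\Ga,\theta)$-proper); hence $\sum_{\ga\in\Ga}e^{\psi(\beta_\xi^\theta(e,\ga))}=\infty$ for $\nu$-a.e.\ $\xi$. (iii) From (ii) the atomic case is immediate: if $\nu(\{\xi\})>0$ then $\xi$ is conical, and choosing $\ga_n$ as above gives $\nu(\{\ga_n^{-1}\xi\})=e^{\psi(\beta_\xi^\theta(e,\ga_n))}\nu(\{\xi\})\to\infty$, impossible; so $\nu$ is non-atomic.

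\emph{A minimal subvariety and the main computation.} Suppose $\nu(S_0)>0$ for some proper subvariety $S_0\subsetneq\F_\theta$. Choose among all $\nu$-positive proper subvarieties one, $S$, of least dimension $k$ (so $k\ge 1$ by the atomic case), and, replacing it by a component, take it irreducible; then $\overline{S\cap\La_\theta}^{\mathrm{Zar}}=S$ by minimality since $\nu(S\cap\La_\theta)=\nu(S)>0$. Fixing a $G$-equivariant embedding $\F_\theta\hookrightarrow\P(V)$, we have $\deg(\ga S)=\deg S$ for all $\ga$. For $\ga\in\Ga$, $S\cap\ga S$ is either $S$ or a proper subvariety of the irreducible $S$, hence of dimension $<k$ and $\nu$-null, while $\nu(\ga S)=\int_S e^{\psi(\beta_\xi^\theta(e,\ga^{-1}))}\,d\nu(\xi)>0$ since the Radon--Nikodym factor is everywhere positive. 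Thus the distinct $\Ga$-translates $V_1,V_2,\dots$ of $S$ are pairwise $\nu$-almost-disjoint with $\sum_i\nu(V_i)\le 1$. Put $\Ga_0=\Stab_\Ga(S)$, so $\{\ga:\ga S=V_i\}$ is a left $\Ga_0$-coset. If $[\Ga:\Ga_0]<\infty$, then $\Ga_0$ is Zariski dense and $\La_\theta(\Ga_0)=\La_\theta$ is the unique $\Ga_0$-minimal set, hence contained in the nonempty closed $\Ga_0$-invariant $S$, contradicting (i). So $\{V_i\}$ is infinite. Now
$$\sum_{\ga\in\Ga}\nu(\ga S)=\int_S\Big(\sum_{\ga\in\Ga}e^{\psi(\beta_\xi^\theta(e,\ga))}\Big)\,d\nu(\xi)=\infty$$
by (ii), the integrand being $+\infty$ on a set of positive $\nu$-measure; but also $\sum_{\ga\in\Ga}\nu(\ga S)=\sum_i|\Ga_0|\cdot\nu(V_i)=|\Ga_0|\sum_i\nu(V_i)$. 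Therefore $\Ga_0$ must be infinite.

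\emph{The stabilizer case and induction on $\dim G$.} It remains to exclude $\Ga_0=\Stab_\Ga(S)$ infinite of infinite index; this is the crux. A quantitative form of the shadow lemma enters: for a $\nu$-density point $\xi\in S$ that is conical, contracting onto $\xi$ by suitable $\ga_n^{-1}$ gives $\nu(\ga_n^{-1}S)\ge c_0>0$ for all large $n$; since at most $c_0^{-1}$ of the $V_i$ have mass $\ge c_0$, the translates $\ga_n^{-1}S$ lie in finitely many $\Ga_0$-cosets, which forces $\nu$-a.e.\ $\xi\in S$ to be a $\Ga_0$-conical limit point, so $\nu|_S$ is a divergence-type $(\Ga_0,\psi)$-Patterson--Sullivan measure whose support is Zariski dense in $S$. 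If $\Ga_0$ is Zariski dense in $G$ we again contradict (i). Otherwise $H:=\overline{\Ga_0}^{\mathrm{Zar}}\subsetneq G$; using $\theta$-antipodality one shows $H$ is reductive and $S$ is a closed $H$-orbit, i.e.\ a partial flag variety $\F'$ of the semisimple part $H^{\mathrm{ss}}$ with $\dim H^{\mathrm{ss}}<\dim G$, and that $\Ga_0<H^{\mathrm{ss}}$ is Zariski dense, $\theta'$-transverse, virtually torsion-free, with an adapted proper linear form $\psi'$ of divergence type (divergence of the $\psi'$-Poincar\'e series following from Hopf--Tsuji--Sullivan for $\Ga_0$ and the fact just shown that $\nu|_S$-a.e.\ point is $\Ga_0$-conical). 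Running the whole argument by induction on $\dim G$ — with base cases the rank-one situation of Flaminio--Spatzier and the case $\theta=\Pi$ with trivial opposition involution of Edwards--Lee--Oh — then rules out this configuration. I expect this last step, namely checking that the data inherited on $\F'$ meets all hypotheses and organizing the induction so the configuration is genuinely excluded rather than merely reproduced on a smaller flag variety, to be the main technical difficulty.
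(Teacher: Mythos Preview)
Your reduction to the torsion-free case and your choice of a minimal-dimension irreducible $S$ with $\nu(S)>0$ match the paper, but from there the arguments diverge completely, and yours has a genuine gap in its final step.

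The paper never uses conical points, the shadow lemma, or an induction on $\dim G$. It invokes Canary--Zhang--Zimmer directly: $\nu=\nu_\psi$ is non-atomic and $\Ga$ acts ergodically on $(\La_\theta^{(2)},\nu_\psi\times\nu_{\psi\circ\i})$. The extra ingredient is a short convergence-group lemma: if a torsion-free $\Ga$ acts on a compact metrizable $X$ as a convergence group and $W\subset X$ is compact with $\#W\ge 2$, then $\Ga_W=\{\ga:\ga W=W\}$ acts properly discontinuously on $X-W$. Taking $X=\La_{\theta\cup\i(\theta)}$ and $W=f_\theta^{-1}(S\cap\La_\theta)$, one gets an open $U\subset X-W$ with $\ga U\cap U=\emptyset$ for every nontrivial $\ga\in\Ga_W$, and inside $U$ disjoint open sets $V$ and $\ga_0 V$ for some $\ga_0\in\Ga$. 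Product ergodicity applied to $S\times f_{\i(\theta)}(V)$ then produces $\ga$ with $\nu(S\cap\ga S)>0$ and $\nu_{\i}(f_{\i(\theta)}(\ga_0 V\cap\ga V))>0$; minimality forces $\ga S=S$, hence $\ga W=W$, hence $\ga U\cap U=\emptyset$, contradicting $\ga_0 V\cap\ga V\ne\emptyset$. No structure theory of $\Stab_\Ga(S)$ is ever needed.

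Your route instead arrives at $\Ga_0=\Stab_\Ga(S)$ infinite, of infinite index, not Zariski dense in $G$, and then asserts that $H=\overline{\Ga_0}^{\mathrm{Zar}}$ is reductive, that $S$ is a closed $H$-orbit and hence a flag variety $\F'$ of $H^{\mathrm{ss}}$, and that $\Ga_0$ is $\theta'$-transverse in $H^{\mathrm{ss}}$ with a divergent $\psi'$-series. None of these claims is justified: $H$ stabilizes $S$ set-wise but need not act transitively on it, there is no reason for $H$ to be reductive, and $\theta$-antipodality in $G$ does not obviously descend to any antipodality condition inside $H$. More fatally, even granting everything, the induction does not close: the inductive hypothesis for $H^{\mathrm{ss}}$ would only tell you that $\nu|_S$, viewed as a Patterson--Sullivan measure on $\F'$, gives zero mass to \emph{proper} subvarieties of $\F'$ --- which is exactly what minimality of $\dim S$ already gave --- and produces no contradiction with $\nu(S)>0$. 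You flag this yourself (``organizing the induction so the configuration is genuinely excluded rather than merely reproduced''), and indeed this is the missing idea; the paper's use of the second measure $\nu_{\psi\circ\i}$ together with the convergence-group lemma is precisely what replaces this unfinished step.
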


 For a $\theta$-Anosov $\Ga$, the existence of a $(\Ga, \psi)$-Patterson-Sullivan measure implies that $\psi$ is $(\Ga, \theta)$-proper and $\sum_{\ga \in \Ga} e^{-\psi(\mu_{\theta}(\ga))} = \infty$ (\cite{lee2020invariant}, \cite{sambarino2022report}). Therefore Theorem \ref{main.Anosov} is a special case of Theorem \ref{main}.

\bigskip 
Added to the proof: 
The growth indicator $\psi_\Ga^\theta$ is a higher rank version of the classical critical exponent of $\Ga$ (\cite{Quint2002divergence}, \cite{kim2023growth}).
For a Zariski dense $\theta$-transverse subgroup and a $(\Ga, \theta)$-proper $\psi \in \fa_{\theta}^*$, the existence of a $(\Ga, \psi)$-conformal measure implies that $\psi$ is bounded from below by
$\psi_\Ga^\theta$  (\cite[Theorem 8.1]{Quint2002Mesures} for $\theta = \Pi$, \cite[Theorem 1.4]{kim2023growth} for a general $\theta$).
When $\Ga$ is relatively $\theta$-Anosov and $\psi$ is tangent to $\psi_\Ga^\theta$,  the abscissa of convergence of the series $s \mapsto \sum_{\ga \in \Ga} e^{-s\psi(\mu_{\theta}(\ga))}$ is equal to $1$, and a recent work \cite[Theorem 1.1]{CZZ_relative} shows that
 $\sum_{\ga \in \Ga} e^{-\psi(\mu_{\theta}(\ga))} = \infty$. Therefore 
Theorem \ref{main} also applies in this setting.

\subsection*{Acknowledgement} We would like to thank Subhadip Dey for helpful conversations.

\section{Ergodic properties of Patterson-Sullivan measures}\label{ergs}
Let $G$ be a connected semisimple real algebraic group.
Let $P < G$ be a minimal parabolic subgroup with a fixed Langlands decomposition $P=MAN, $ where $A$ is a maximal real split torus of $G$, $M$ is a maximal compact subgroup commuting with $A$, and $N$ the unipotent radical of $P$. We fix a positive Weyl chamber $\mathfrak{a}^+\subset \mathfrak{a} = \op{Lie}A$ so that $\log N$ consists of positive root subspaces. 
Recall that $K< G$ denotes a maximal compact subgroup such that the Cartan decomposition $G=K(\exp \fa^+) K$ holds and denote by $\mu:G\to \mathfrak{a}^+$ the Cartan projection, i.e., $\mu(g) \in \mathfrak{a}^+$ is the unique element such that $g\in K\exp(\mu(g))K$
 for $g\in G$.
Let $w_0\in K$ be an element of the normalizer of $A$ such that $\op{Ad}_{w_0}\mathfrak a^+= -\mathfrak a^+$.
  The opposition involution  $\i:\mathfrak a \to \mathfrak a$ is defined by
  \be \label{eqn.opposition}
  \i (u)= -\op{Ad}_{w_0} (u) \quad\text{for $u\in \fa$. }
  \ee
Note that $\mu(g^{-1})=\i (\mu(g))$ for all $g\in G$.

 Let $\Pi$ denote the set of all simple roots for $(\frak g, \frak a^+)$.
Fix a non-empty subset $\theta\subset \Pi$. Let $ P_\theta^-$ and $P_\theta^+$ be a pair of opposite standard parabolic subgroups of $G$ corresponding to $\theta$; here $P_\theta:=P_\theta^-$ is chosen to contain $P$. 
 We set $$\F_\theta^-=G/P_{\theta}^-\quad \text{ and } \quad \F_\theta^{+}=G/P_{\theta}^{+} .$$
We also write $\F_\theta=\F_\theta^-$ for simplicity.
We set $P=P_\Pi$ and $\F=\F_\Pi$. Since $P_\theta^+$ is conjugate to $P_{\i(\theta)}$,
    we have $\F_{\i(\theta)}=\F_\theta^+$.
   We say $\xi\in \F_\theta$ and $\eta\in \F_{\i(\theta)}$ are in general position if $(\xi, \eta)\in G (P_\theta^-, P_\theta^+)$ under the diagonal $G$-action on $\F_\theta\times \F_{\i(\theta)}$. We write
    $$\F_\theta^{(2)}= G (P_\theta^-, P_\theta^+),$$ which is the unique open $G$-orbit
    in $\F_\theta\times \F_{\i(\theta)}$.
    
Let $\mathfrak{a}_\theta =\bigcap_{\alpha \in \Pi - \theta} \ker \alpha$ and denote by $\fa_{\theta}^*$ the space of all linear forms on $\fa_{\theta}$. We set $p_\theta:\mathfrak{a}\to\mathfrak{a}_\theta$ the unique projection invariant under the subgroup of the Weyl group  fixing  $\fa_\theta$ pointwise. Set $\mu_{\theta} := p_{\theta} \circ \mu$.

The $\frak a$-valued Busemann map $\beta: \cal F\times G \times G \to\frak a $ is defined as follows: for $\xi\in \cal F$ and $g, h\in G$,
$$  \beta_\xi ( g, h):=\sigma (g^{-1}, \xi)-\sigma(h^{-1}, \xi)$$
where  $\sigma(g^{-1},\xi)\in \fa$
is the unique element such that $g^{-1}k \in K \exp (\sigma(g^{-1}, \xi)) N$ for any $k\in K$ with $\xi=kP$.
For $\xi=kP_\theta\in \cal F_\theta$ for $k\in K$, we define the $\fa_{\theta}$-valued Busemann map $\beta^{\theta} : \F_{\theta} \times G \times G \to \fa_{\theta}$ as
 \be\label{Bu} \beta_{\xi}^\theta (g, h): = 
p_\theta ( \beta_{kP} (g, h))\in \fa_\theta;\ee 
this is well-defined \cite[Section 6]{Quint2002Mesures}.

In the rest of this section, let  $\Ga < G$ be a Zariski dense $\theta$-transverse subgroup as in Definition \ref{tra}. For a $(\Ga, \theta)$-proper linear form $\psi\in \fa_\theta^*$,
we denote by $\delta_\psi\in (0, \infty]$ the abscissa of convergence of the series $\cal P_{\psi}(s) := \sum_{\ga\in \Ga}e^{-s \psi(\mu_{\theta}(\ga))}$; this is well-defined \cite[Lemma 4.2]{kim2023growth}. We set
$$\cal D_{\Ga}^{\theta}:=\{\psi\in \fa_\theta^*: (\Ga, \theta)\text{-proper, } \delta_\psi=1 \text{ and } \cal P_{\psi}(1)= \infty\}.$$ 
Note that $\psi \circ \i$ can be regarded as a linear form on $\fa_{\i(\theta)}$. Using the property that $\i (\mu(g))=\mu(g^{-1})$ for all $g\in G$,
we deduce that $\cal P_{\psi} = \cal P_{\psi \circ \i}$ and hence $\psi \in \cal D_{\Ga}^{\theta}$ if and only if $\psi \circ \i \in \cal D_{\Ga}^{\i(\theta)}$.

 The $\theta$-limit set $\La_{\theta}$ of $\Ga$ is the unique $\Ga$-minimal subset of $\F_{\theta}$ \cite{Benoist1997proprietes}. We also write 
    \be\label{ge} \La_\theta^{(2)} := \{(\xi, \eta)\in \F_\theta^{(2)}:\xi\in \La_\theta,\eta\in  \La_{\i(\theta)}\}.\ee 
The following ergodic property of Patterson-Sullivan measures was obtained by Canary-Zhang-Zimmer \cite{Canary2023} (see also \cite{kim2023growth}, \cite{kim2023ergodic}).

\begin{theorem}
\cite[Proposition 9.1, Corollary 11.1]{Canary2023} \label{thm.czz} \label{na} Suppose that $\theta=\i(\theta)$. Let $\Ga<G$ be a Zariski dense $\theta$-transverse subgroup.
For any $\psi\in \cal D_{\Ga}^{\theta}$, there exists a unique $(\Ga, \psi)$-Patterson-Sullivan measure $\nu_\psi$ on $\La_{\theta}$
and $\nu_\psi$ is  non-atomic.
Moreover, the diagonal $\Ga$-action on $(\La_{\theta}^{(2)}, (\nu_\psi \times \nu_{\psi\circ \i})|_{\La_{\theta}^{(2)}})$ is ergodic.
    \end{theorem}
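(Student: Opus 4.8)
The plan is to follow the Patterson--Sullivan--Roblin paradigm, adapted to the higher-rank $\theta$-transverse setting (here $\theta=\i(\theta)$): first construct $\nu_\psi$ and, by the same recipe, a companion measure $\nu_{\psi\circ\i}$; then prove a shadow lemma and combine it with the divergence $\cal P_\psi(1)=\infty$ to get conservativity and ergodicity of an associated $\fa_\theta$-flow; and finally read off uniqueness, non-atomicity and ergodicity of the diagonal action on $\La_\theta^{(2)}$.

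\emph{Existence and conformality.} For $s>\delta_\psi=1$ I would form the probability measures $\nu_s:=\cal P_\psi(s)^{-1}\sum_{\ga\in\Ga}e^{-s\psi(\mu_\theta(\ga))}\,\delta_{\xi_\ga}$ on $\F_\theta$, where $\xi_\ga=k_\ga P_\theta$ for a Cartan decomposition $\ga=k_\ga(\exp\mu(\ga))k_\ga'$, and take a weak-$*$ limit $\nu_\psi$ along $s\to 1^+$. Because $\psi\in\cal D_{\Ga}^{\theta}$ the Poincar\'e series diverges at $s=1$, so the $\nu_s$-mass of any fixed finite subset of $\Ga$ tends to $0$; together with $\theta$-regularity, which forces $\mu_\theta(\ga)\to\infty$ so that the $\xi_\ga$ accumulate only on $\La_\theta$, this shows that $\nu_\psi$ is a probability measure supported on $\La_\theta$. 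The $(\Ga,\psi)$-conformality is then extracted from the reindexing identity $\ga_*\nu_s=\cal P_\psi(s)^{-1}\sum_{\ga'}e^{-s\psi(\mu_\theta(\ga^{-1}\ga'))}\delta_{\xi_{\ga'}}$ and the asymptotic relation $\mu_\theta(\ga^{-1}\ga')=\mu_\theta(\ga')+\beta^\theta_{\xi_{\ga'}}(\ga,e)+o(1)$ as $\mu_\theta(\ga')\to\infty$, whose error term is controlled uniformly by $\theta$-regularity. The same construction applied to $\psi\circ\i\in\cal D_{\Ga}^{\theta}$ produces $\nu_{\psi\circ\i}$.

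\emph{Shadow lemma and ergodicity of the flow.} Next I would prove a Sullivan--Quint shadow lemma: for every large $R$ there is $C=C(R)>1$ with
\[
C^{-1}e^{-\psi(\mu_\theta(\ga))}\ \le\ \nu_\psi(\O_R(\ga))\ \le\ C\,e^{-\psi(\mu_\theta(\ga))}\qquad(\ga\in\Ga),
\]
$\O_R(\ga)\subset\F_\theta$ being the shadow of the $R$-ball about $\ga K$ in $G/K$ seen from $K$; the upper bound uses only conformality, while the lower bound uses Zariski density and $\theta$-antipodality to guarantee that $\nu_\psi$ charges every such shadow uniformly. I would then build the flow space: with $\mathcal G^\theta:\La_\theta^{(2)}\to\fa_\theta$ the ($\i$-symmetrized) $\fa_\theta$-valued Gromov product, set $\tilde m_\psi:=e^{\psi(\mathcal G^\theta(\xi,\eta))}\,d\nu_\psi(\xi)\,d\nu_{\psi\circ\i}(\eta)\,db$ on $\La_\theta^{(2)}\times\fa_\theta$; this is invariant under $\fa_\theta$-translation in the last coordinate and under the diagonal $\Ga$-action twisted by the Busemann cocycle there, so it descends to a measure $m_\psi$ on $\Ga\backslash(\La_\theta^{(2)}\times\fa_\theta)$ carrying the residual $\fa_\theta$-flow. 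The shadow lemma together with $\cal P_\psi(1)=\infty$ yields, via a Borel--Cantelli / Hopf--Tsuji--Sullivan dichotomy, that $m_\psi$ is conservative in the $\psi$-direction of the flow; a Hopf-type argument then promotes this to $\fa_\theta$-ergodicity of $m_\psi$. Finally, an $\fa_\theta$-ergodic $m_\psi$ forces ergodicity of the $\Ga$-action on the base, which is exactly the asserted ergodicity of $(\La_\theta^{(2)},(\nu_\psi\times\nu_{\psi\circ\i})|_{\La_\theta^{(2)}})$.

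\emph{Uniqueness, non-atomicity, and the main difficulty.} For uniqueness, given any $(\Ga,\psi)$-Patterson--Sullivan measure $\nu$ on $\La_\theta$, I would form $\tilde m_\nu:=e^{\psi(\mathcal G^\theta(\xi,\eta))}\,d\nu(\xi)\,d\nu_{\psi\circ\i}(\eta)\,db$; the shadow lemma for $\nu$ and the divergence again give conservativity, and ergodicity of the canonical $m_\psi$ forces $\tilde m_\nu$ to be proportional to $\tilde m_\psi$, which after disintegration in the first coordinate and using that both $\nu$ and $\nu_\psi$ are probability measures yields $\nu=\nu_\psi$, as in Roblin's uniqueness argument. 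Non-atomicity follows because an atom of $\nu_\psi$, by conformality, would produce a countable $\Ga$-invariant subset of $\La_\theta$ carrying a $(\Ga,\psi)$-conformal density, and inserting this into the shadow lemma contradicts $\sum_{\ga\in\Ga}e^{-\psi(\mu_\theta(\ga))}=\infty$ (equivalently, such an atom would make $m_\psi$ dissipative in the $\psi$-direction). I expect the main obstacle to be precisely the shadow lemma and the conservativity dichotomy in this higher-rank $\theta$-setting: since $\fa_\theta$ is a cone rather than a line, ``recurrence'' must be read along the $\psi$-direction, and one needs both the $(\Ga,\theta)$-properness of $\psi$ and $\theta$-regularity to keep the $\fa_\theta$-valued Busemann and Gromov cocycles controlled and to make shadows behave like metric balls, while $\theta$-antipodality is what makes the two-sided shadow estimate --- and hence the comparison of $m_\psi$ with a product measure --- go through.
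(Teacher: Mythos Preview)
The paper does not contain a proof of this theorem: it is quoted verbatim from Canary--Zhang--Zimmer \cite[Proposition 9.1, Corollary 11.1]{Canary2023} (with companion references \cite{kim2023growth}, \cite{kim2023ergodic}) and is used as a black box in the proof of Theorem~\ref{main}. There is therefore nothing in the present paper to compare your argument against.

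That said, your sketch follows the standard Patterson--Sullivan--Roblin scheme that the cited references implement: Patterson's construction for existence, a Sullivan--Quint shadow lemma, a Bowen--Margulis--Sullivan-type measure on $\Ga\backslash(\La_\theta^{(2)}\times\fa_\theta)$, and a Hopf--Tsuji--Sullivan dichotomy to pass from divergence of $\cal P_\psi$ at $s=1$ to conservativity and ergodicity. Your identification of the main difficulty is accurate: the delicate step is establishing the dichotomy when $\fa_\theta$ has rank $>1$, which in \cite{Canary2023} is handled by passing to a suitable one-dimensional reparametrized flow (in the $\psi$-direction) using $(\Ga,\theta)$-properness. One point to tighten: your non-atomicity argument is slightly off as stated --- an atom does not by itself make $m_\psi$ dissipative. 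The cleaner route (and the one taken in the cited work) is to observe that by conformality and the divergence $\sum_\ga e^{-\psi(\mu_\theta(\ga))}=\infty$, the $\Ga$-orbit of any atom would carry infinite $\nu_\psi$-mass via the shadow estimate, contradicting $\nu_\psi(\F_\theta)=1$.
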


\section{A property of convergence group actions}\label{convs}
In this section, we prove a certain property of convergence group actions which we will need in the proof of our main theorem in the next section. We refer to \cite{Bowditch1999convergence} for basic properties of convergence group actions.  Let $\Ga$ be a countable group acting on a compact metrizable space $X$  (with $\# X\ge 3$) by homeomorphisms. 
This action is called a \emph{convergence group action} if for any sequence of distinct elements $\ga_n \in \Ga$, there exist  a subsequence $\ga_{n_k}$ and $a, b \in X$ such that as $k \to \infty$, 
$\ga_{n_k}(x) $ converges to $ a $ for all $x\in X-\{b\}$, uniformly on compact subsets. 
In this case, we say $\Ga$ acts on $X$ as a \emph{convergence group}, which we suppose in the following.
Any element $\ga\in \Ga$ of infinite order
fixes precisely one or two points of $X$, and  $\ga$ is called parabolic or loxodromic accordingly.
In that case, there exist  $a_{\ga}, b_{\ga} \in X$, fixed by $\ga$, such that $\ga^n|_{X- \{b_{\ga}\}} \to a_\ga$ uniformly on compact subsets as $n \to \infty$. We have
 $\ga$ loxodromic if and only if $a_{\ga}\ne b_{\ga}$, in which case
 $a_{\ga}$ and $b_\ga$ are called the attracting and repelling fixed points of $\ga$ respectively.

We will use the following lemma in the next section:

\begin{lemma} \label{lem.convact}\label{cont}
    Let $\Ga$ be a torsion-free countable group acting on a compact metric space $X$ as a convergence group. For any compact subset $W$ of $X$ with at least two points, the subgroup $\Ga_W = \{\ga \in \Ga : \ga W = W\}$ acts on $X- W$ properly discontinuously, that is, for any $\eta \in X - W$, there exists an open neighborhood $U$ of $\eta$ such that $\ga U \cap U \neq \emptyset$ for $\ga \in \Ga_W$ implies $\ga = e$. 
\end{lemma}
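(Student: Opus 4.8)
Let me think about how to prove this lemma. We have a torsion-free countable group $\Gamma$ acting on a compact metric space $X$ as a convergence group, and $W \subseteq X$ is a compact subset with $|W| \geq 2$. We want to show $\Gamma_W = \{\gamma \in \Gamma : \gamma W = W\}$ acts properly discontinuously on $X - W$.

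Suppose for contradiction that proper discontinuity fails. Then there is a point $\eta \in X - W$ such that for every open neighborhood $U$ of $\eta$ there's a non-identity $\gamma \in \Gamma_W$ with $\gamma U \cap U \neq \emptyset$. Taking a shrinking sequence of neighborhoods $U_n$ of $\eta$, we get a sequence of non-identity elements $\gamma_n \in \Gamma_W$ and points $x_n \in U_n$ with $\gamma_n x_n \in U_n$, so $x_n \to \eta$ and $\gamma_n x_n \to \eta$.

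First I need to handle whether the $\gamma_n$ can be taken distinct. If infinitely many $\gamma_n$ coincide with some fixed $\gamma \neq e$, then $\gamma$ fixes $\eta$ (by continuity, $\gamma\eta = \eta$). Since $\Gamma$ is torsion-free, $\gamma$ has infinite order, so it's parabolic or loxodromic; in either case its fixed point set in $X$ is nonempty and $\gamma$-invariant. The key point: a loxodromic or parabolic element cannot fix a point $\eta \in X - W$ while also preserving $W$ — actually I need to think more carefully. A loxodromic $\gamma$ has exactly two fixed points $a_\gamma, b_\gamma$; a parabolic has exactly one. If $\gamma$ preserves $W$, then $\gamma$ acts on the compact set $W$; since $W$ has at least two points... hmm, this needs the dynamics. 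Let me instead proceed via the convergence property directly, which handles both cases uniformly.

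So assume the $\gamma_n$ are distinct (passing to a subsequence; the finitely-many-values case is the easy case I just sketched and can be dispatched separately using that $\langle\gamma\rangle$ is infinite, acts on $X$ as convergence group, hence has attracting/repelling dynamics incompatible with preserving both $\eta\notin W$ and the compact set $W$ with $|W|\ge 2$). By the convergence property, pass to a further subsequence so that there exist $a, b \in X$ with $\gamma_n(x) \to a$ for all $x \in X - \{b\}$ uniformly on compact subsets. Apply this to $\gamma_n^{-1} \in \Gamma_W$ as well: after a further subsequence, $\gamma_n^{-1}(x) \to a'$ for all $x \in X - \{b'\}$, and one checks $a' = b$ (the roles of source/sink swap).

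The plan now splits into cases according to whether $\eta = b$ (the repelling point of the $\gamma_n$).

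Case 1: $\eta \neq b$. Then $\gamma_n \eta \to a$. But also $\gamma_n x_n \to \eta$ with $x_n \to \eta \neq b$; since convergence is uniform on compact neighborhoods of $\eta$ (which stay away from $b$), $\gamma_n x_n \to a$ as well, forcing $\eta = a$. Now I show $a \in W$ and $b \in W$, which contradicts $\eta = a \notin W$. To see $W$ contains $a$: pick any $w \in W$. If $w \neq b$ then $\gamma_n w \to a$; but $\gamma_n w \in \gamma_n W = W$ and $W$ is compact, so $a \in W$. Since $|W| \geq 2$, at most one point of $W$ equals $b$, so such a $w \neq b$ exists. This gives $a \in W$, contradicting $a = \eta \in X - W$.

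Case 2: $\eta = b$. Apply the same argument to $\gamma_n^{-1}$, whose repelling point is $a$ (and attracting point $b = \eta$). We have $\gamma_n^{-1}(\gamma_n x_n) = x_n \to \eta$ and $\gamma_n x_n \to \eta = b$. Hmm, this is on the boundary, so I should instead use: from $\gamma_n x_n \to \eta$ and $x_n \to \eta = b$, and the fact that $\gamma_n^{-1}(y) \to a$ for $y \neq b$ — but $\gamma_n x_n \to b$, so I can't directly apply it. Let me reconsider: pick a point $z \in X - \{a, b\}$ (possible since $|X| \geq 3$). Then $\gamma_n^{-1} z \to a$, so $\gamma_n(\gamma_n^{-1} z) = z$; meanwhile we need a contradiction from $W$ being invariant. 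Actually the cleanest route in Case 2: since $W$ is $\gamma_n$-invariant and $|W|\ge 2$, choose $w_1, w_2 \in W$ distinct. At least one, say $w_1$, differs from $b = \eta$; then $\gamma_n w_1 \to a$, and as before $a \in W$. Applying the convergence for $\gamma_n^{-1}$: at least one $w_i \neq a$, so $\gamma_n^{-1} w_i \to b = \eta$, giving $\eta = b \in W$ — but $\eta \in X - W$, contradiction.

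I realize Cases 1 and 2 both conclude by showing $\{a, b\} \cap W \neq \emptyset$ in a way that forces $\eta \in W$, so I can streamline: the convergence property applied to the $W$-preserving sequence $\gamma_n$ forces both $a \in W$ and $b \in W$ (using $|W| \geq 2$ so that $W$ meets both $X - \{b\}$ and $X - \{a\}$), hence the attracting point $a$ of $\gamma_n$ lies in $W$; then $\gamma_n x_n \to \eta$ with $x_n \to \eta$ forces $\eta \in \{a, b\} \subseteq W$ (handling $\eta = b$ by instead looking at $\gamma_n^{-1}$, whose attracting point is $b$, and noting $\gamma_n^{-1}$ also preserves $W$ so $b \in W$ — already have that). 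In all cases $\eta \in W$, contradicting $\eta \in X - W$.

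**Main obstacle.** The technical heart is the boundary case $\eta = b$, where the uniform-convergence statement $\gamma_n(x) \to a$ cannot be applied to the point $x_n \to \eta = b$ directly. The fix is to avoid needing $\gamma_n x_n \to a$ in that case and instead extract the contradiction purely from invariance of the compact set $W$ under both $\gamma_n$ and $\gamma_n^{-1}$, using $|W| \geq 2$ and $|X| \geq 3$ to guarantee the relevant points of $W$ avoid the exceptional points $a, b$. Also mildly delicate is the reduction to distinct $\gamma_n$: one must separately rule out that a single nontrivial infinite-order element (parabolic or loxodromic) can fix a point outside $W$ while preserving the compact set $W$; this again follows from the north–south / parabolic dynamics forcing $W$ to be contained in $\{a_\gamma, b_\gamma\}$, impossible once $|W| \geq 2$ with $\eta \notin W$ a third fixed-behaviour point.
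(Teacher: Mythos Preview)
Your argument follows the same arc as the paper's: set up the contradiction with $\eta_n\to\eta$, $\gamma_n\eta_n\to\eta$; reduce to distinct $\gamma_n$; apply the convergence property to get $(a,b)$; split on whether $\eta=b$. Two points are worth noting.

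In your summary of the constant-subsequence case you write that the dynamics force $W\subset\{a_\gamma,b_\gamma\}$. That is false in general (a loxodromic element can preserve large compact invariant sets). The paper's argument---which you essentially reached earlier before abandoning it---is the correct one: since $\gamma$ fixes $\eta$ and has at most two fixed points, and $\eta\notin W$ with $\#W\ge 2$, there exists $w\in W$ not fixed by $\gamma$; then $\gamma^n w\to\eta$ or $\gamma^{-n}w\to\eta$, and compactness plus $\gamma$-invariance of $W$ gives $\eta\in W$, a contradiction.

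In the case $\eta=b$, the paper's argument is shorter and avoids passing to $\gamma_n^{-1}$: since $b=\eta\notin W$ we have $W\subset X-\{b\}$, so $\gamma_n|_W\to a$ uniformly; combined with $\gamma_n W=W$ this forces $W=\{a\}$, contradicting $\#W\ge 2$. Your route via $\gamma_n^{-1}$ is valid (granting the standard fact that the inverse subsequence has attracting/repelling pair $(b,a)$), just slightly roundabout.
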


\begin{proof}
Suppose not. Then there exist $\eta \in X - W$,
a decreasing sequence of open neighborhoods $U_n$ of $\eta$ in $X$ with $\bigcap_n U_n=\{\eta\}$ and a sequence $e\ne \ga_n\in \Ga$ such that  $\ga_n W=W$ and $\ga_n U_n \cap U_n \neq \emptyset$ 
for each $n\in \N$. Hence there exists a sequence $\eta_n \in U_n\cap \ga_n^{-1}U_n$; so $\eta_n \to \eta$ and $\ga_n \eta_n \to \eta$ as $n \to \infty$.

We claim that the elements $\ga_n$ are all pairwise distinct, possibly after passing to a subsequence. Otherwise, it would mean that, after passing to a subsequence, $\ga_n$'s are constant sequence, say $\ga_n = \ga\ne e$. Since $\ga \eta = \lim_n \ga_n \eta_n = \eta$,  $\eta$ must be a fixed point of $\ga$. Since $\Ga$ is torsion-free, $\ga$ is either parabolic or loxodromic, and in particular it has at most two fixed points in $X$, including $\eta$. Since $\eta \not\in W$ and $W$ has at least two points, we can take $w \in W$ which is not fixed by $\ga$. Then as $n\to +\infty$, $\ga^n w\to \eta$ or $\ga^{-n}w\to \eta$. 
Since $W$ is a compact subset such that $\ga W=W$ and $\eta \notin W$, this yields a contradiction.

Therefore we may assume that $\{\ga_n\}$ is an infinite sequence of distinct elements. Since the action of $\Ga$ on $X$ is a convergence group action, there exist a subsequence $\ga_{n_k}$  and $a, b \in X$ such that as $k\to \infty$,
$\ga_{n_k}(x)$ converges to  $ a$  for all $x\in X - \{b\}$, uniformly on compact subsets. There are two cases to consider.
Suppose that $b=\eta$.
Then  $W \subset X - \{b\}$, and hence  $\ga_{n_k}W \to a$ uniformly as $k \to \infty$. Since $\ga_{n_k} W = W$ and $W$ is a compact subset, it follows that $W=\{a\}$, contradicting the hypothesis that $W$  consists of at least two elements. 
Now suppose that $b \neq \eta$.
Since $\eta_{n_k}$ converges to $ \eta$, we may assume that $\eta_{n_k} \neq b$ for all $k$. Noting that $\# W \ge 2$, we can take $w_0 \in W - \{b\}$. If we now consider the following compact subset $$W_0 := \{\eta_{n_k} : k \in \N \} \cup \{\eta, w_0\} \subset X - \{b\},$$
we then have $\ga_{n_k} W_0 \to a$ uniformly as $k \to \infty$. Since $\eta_{n_k} \in W_0$ for each $k$ and $\ga_{n_k}\eta_{n_k} \to \eta$ as $k \to \infty$, we must have $$a = \eta.$$

On the other hand, since $w_0 \in W_0\cap W$, $\ga_{n_k} w_0 \to \eta$ as $k \to \infty$. This implies $\eta\in W$ since $W$ is compact and $\ga_{n_k}w_0\in W$, yielding a contradiction to the hypothesis $\eta\notin W$. This completes the proof.
\end{proof}

We denote by $\La_X$ the set of all accumulation points of a $\Ga$-orbit in $X$.  If $\#\La_X\ge 3$,
the $\Ga$-action is called non-elementary and $\La_X$ is the unique $\Ga$-minimal subset \cite{Bowditch1999convergence}.

A well-known example of a convergence group action is given by a word hyperbolic group $\Ga$. Fix a finite symmetric generating subset $S_{\Ga}$ of $\Ga$.  A geodesic ray in $\Ga$ is an infinite  sequence $(\ga_i)_{i=0}^{\infty}$ of elements of $\Ga$ such that $\ga_i^{-1}\ga_{i+1}\in S_{\Ga}$ for all $i\ge 0$. The Gromov boundary $\partial\Ga$ is the set of equivalence classes of geodesic rays, where two rays are equivalent to each other if and only if their Hausdorff distance is finite.
The group $\Ga$ acts on $\partial \Ga$ by  $\ga \cdot [(\ga_i)]=[(\ga \ga_i)]$. 
This action is known to be a convergence group action \cite[Lemma 1.11]{Bowditch1999convergence}.

Another important example of a convergence group action is the action of a $\theta$-transverse subgroup $\Ga$ on $\La_{\theta \cup \i(\theta)}$:
\begin{prop} \cite[Theorem 4.21]{Kapovich2017anosov} \label{prop.transisconv}
For a $\theta$-transverse subgroup $\Ga$,
the action of  $\Ga$ on $\La_{\theta \cup \i(\theta)}$ is a convergence group action.
\end{prop}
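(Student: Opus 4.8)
The plan is to read off the convergence dynamics directly from the Cartan decomposition, using $\theta$-regularity to produce the attracting behaviour and $\theta$-antipodality to collapse the exceptional set to a single point; since a $\theta$-transverse subgroup need not be word hyperbolic, I would argue intrinsically on the flag variety rather than through a Gromov boundary. Throughout write $\tau=\theta\cup\i(\theta)$, so that $\i(\tau)=\tau$ and $\F_\tau$ is self-opposite, the identification $\F_{\i(\tau)}=\F_\tau^+$ letting us regard $\F_\tau^+$ and $\F_\tau^-=\F_\tau$ as the same variety; the action to be analyzed is that of $\Ga$ on $X:=\La_\tau\subset\F_\tau$. The first step is to upgrade regularity: since $\i(\mu(\ga))=\mu(\ga^{-1})$ and inversion permutes $\Ga$, the hypothesis $\liminf_{\ga}\alpha(\mu(\ga))=\infty$ for every $\alpha\in\theta$ forces the same for every $\alpha\in\i(\theta)$, hence for every $\alpha\in\tau$. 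In particular, for any sequence of distinct $\ga_n\in\Ga$ one has $\alpha(\mu(\ga_n))\to\infty$ for all $\alpha\in\tau$.

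Given such a sequence, I would Cartan-decompose $\ga_n=k_n\exp(\mu(\ga_n))l_n$ with $k_n,l_n\in K$ and pass to a subsequence so that $k_n\to k$ and $l_n\to l$ in the compact group $K$. The analytic core is the standard contraction estimate on $\F_\tau$: if $X_n\in\fa^+$ satisfies $\alpha(X_n)\to\infty$ for all $\alpha\in\tau$, then $\exp(X_n)$ attracts $\F_\tau$ to the base point $\xi_0^-=eP_\tau^-$, uniformly on compact subsets of the big cell consisting of those $\eta$ with $(\eta,\xi_0^+)\in\F_\tau^{(2)}$, where $\xi_0^+=eP_\tau^+$. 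Granting this, writing $\ga_n\eta=k_n\exp(\mu(\ga_n))(l_n\eta)$ and using $l_n\eta\to l\eta$ shows $\ga_n\eta\to a:=k\,\xi_0^-$ whenever $\eta$ is in general position with the flag $b:=l^{-1}\xi_0^+$, which under $\F_\tau^+=\F_{\i(\tau)}$ we view as a point of $\F_\tau$; the exceptional locus is $E=\{\eta\in\F_\tau:(\eta,b)\notin\F_\tau^{(2)}\}$. The analogous computation for $\ga_n^{-1}$, using $\mu(\ga_n^{-1})=\i(\mu(\ga_n))$ and $w_0\exp(\i(X))w_0^{-1}=\exp(-X)$, identifies $b$ as precisely the attracting point of the inverse sequence.

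It remains to intersect with $X=\La_\tau$. Because $\La_\tau$ is closed and $\Ga$-invariant, $a=\lim_n\ga_n\eta\in\La_\tau$ for any $\eta\in\La_\tau\setminus E$, and symmetrically $b\in\La_\tau$ since it is the attracting point of $\ga_n^{-1}$. Here $\theta$-antipodality enters decisively: distinct points of $\La_\tau$ lie in $\F_\tau^{(2)}$, so the only point of $\La_\tau$ failing to be in general position with $b$ is $b$ itself, whence $E\cap\La_\tau=\{b\}$ and $\ga_n(x)\to a$ for every $x\in X\setminus\{b\}$. Finally, since $\La_\tau$ is compact and $b$ is a single point, every compact subset of $X\setminus\{b\}$ is bounded away from $b$, hence — again by antipodality and continuity — uniformly in general position with $b=l^{-1}\xi_0^+$, so the uniform form of the contraction estimate promotes pointwise convergence to uniform convergence on compacta. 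This establishes the convergence-group property.

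I expect the main obstacle to be the contraction estimate together with the flag bookkeeping around it: one must make the statement ``$\exp(X_n)$ contracts the big cell to $\xi_0^-$'' quantitative and uniform — most cleanly via an equivariant Pl\"ucker embedding of $\F_\tau$ into a projective space, where $\exp(X_n)$ becomes a proximal-type sequence — and one must track the identification $\F_{\i(\tau)}=\F_\tau^+$ so that the attracting point $a$, the repelling point $b$, and the exceptional locus $E$ are matched up consistently. Everything else — the regularity upgrade, membership of $a,b$ in $\La_\tau$, and the collapse $E\cap\La_\tau=\{b\}$ via antipodality — is comparatively routine once this estimate is in place.
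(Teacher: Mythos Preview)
The paper does not prove this proposition; it is quoted from Kapovich--Leeb--Porti and used as a black box. Your sketch is the standard argument and is essentially correct: $\tau$-regularity makes $\exp(\mu(\ga_n))$ contract the big cell of $\F_\tau$ to $\xi_0^-$, compactness of $K$ produces the limit flags $a=k\xi_0^-$ and $b=l^{-1}\xi_0^+$, and $\tau$-antipodality collapses $E\cap\La_\tau$ to a singleton.

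The one place worth tightening is the membership $a,b\in\La_\tau$. As written the justification is mildly circular: to get $a\in\La_\tau$ you need some $\eta\in\La_\tau\setminus E$, i.e.\ some $\eta\in\La_\tau$ in general position with $b$, but antipodality only controls pairs of points \emph{inside} $\La_\tau$, so until you know $b\in\La_\tau$ you cannot rule out $\La_\tau\subset E$ (the proposition does not assume Zariski density, so Zariski density of $\La_\tau$ in $\F_\tau$ is unavailable here). The clean fix is to bypass this step and use directly that for a $\tau$-regular sequence $\ga_n=k_n\exp(\mu(\ga_n))l_n$ the accumulation points of $k_nP_\tau$ and of $l_n^{-1}P_\tau^+$ lie in $\La_\tau$ by the very description of the flag limit set; then $a,b\in\La_\tau$ comes for free, and antipodality yields $E\cap\La_\tau=\{b\}$ exactly as you wrote.
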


\section{Non-concentration property}
We fix a non-empty subset $\theta \subset \Pi$. We first prove the following proposition from which we will deduce Theorem \ref{main}.

\begin{proposition} \label{prop.torsionfree}\label{tf}
 Let $\Ga < G$ be a torsion-free Zariski dense
 discrete subgroup admitting a convergence group action on a compact metrizable space $X$. We assume that this action is $\theta$-antipodal in the sense that  there exist $\Ga$-equivariant homeomorphisms $f_{\theta} : \La_X \to \La_{\theta}$ and $f_{\i(\theta)} : \La_X \to \La_{\i(\theta)}$  such that  for any $\xi \ne \eta$ in $\La_X$,
 $$\left(f_{\theta}(\xi), f_{\i(\theta)}(\eta)\right)\in \La_\theta^{(2)}.$$ 
    Let $\nu$ be a $\Ga$-quasi-invariant measure on $\La_{\theta}$ such that 
    \begin{enumerate}
        \item $\nu$ is non-atomic;
\item  $\Ga$ acts ergodically on $(\La_{\theta}^{(2)}, 
    (\nu \times \nu_{\i})|_{\La_\theta^{(2)}})$ for some $\Ga$-quasi-invariant measure $\nu_{\i}$ on $\La_{\i(\theta)}$.
       \end{enumerate}
       Then for any proper algebraic subset $S$ of $\F_{\theta}$, we have $$\nu(S) = 0.$$
\end{proposition}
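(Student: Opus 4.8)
The plan is to argue by contradiction: suppose $\nu(S)>0$ for some proper algebraic subset $S\subsetneq\F_\theta$, and among all such bad subvarieties choose one of minimal dimension, and among those of minimal dimension one with the fewest irreducible components of that dimension. First I would reduce to the case where $S$ is irreducible: writing $S=\bigcup_i S_i$ into irreducible components, some component carries positive $\nu$-mass, and its dimension is at most $\dim S$; minimality of the choice forces $S$ itself irreducible (and we may discard lower-dimensional components since they have measure zero by minimality). Next, I would use the ergodicity hypothesis. Consider the Borel set $\mathcal S=\{g\cdot S: g\in G\}$ of $G$-translates of $S$; the relevant measurable object is the function $\xi\mapsto$ (the union, or an appropriate ``essential'' envelope, of all minimal-dimension translates $gS$ through $\xi$ that have positive mass). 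Concretely, for $\nu$-a.e.\ $\xi\in S$ with $\nu$-positive local behavior, look at translates $\gamma S$; since $\nu(\gamma S)=\nu(S)\cdot(\text{density factor})>0$, each $\gamma S$ is again bad of minimal dimension, hence irreducible of the same dimension.

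The key step is then: the collection of minimal-dimension irreducible subvarieties $V$ with $\nu(V)>0$ is finite, OR, after intersecting, one produces an even smaller-dimensional bad subvariety, contradicting minimality. Here is the mechanism. If $\gamma S\neq S$ but $\nu(\gamma S\cap S)>0$, then $\gamma S\cap S$ is a proper subvariety of the irreducible variety $S$, hence has strictly smaller dimension, and has positive $\nu$-mass — contradicting minimality of $\dim S$. Therefore for every $\gamma\in\Gamma$, either $\gamma S=S$ or $\nu(\gamma S\cap S)=0$. Thus $\Gamma$ permutes the $\nu$-positive minimal-dimension subvarieties, and the orbit of $S$ consists of pairwise $\nu$-essentially-disjoint sets each of mass $\ge c>0$ on a compact space where translation factors are bounded on compacta — wait, the density factors $e^{\psi(\beta^\theta)}$ need not be bounded below globally, so I instead argue: let $\Gamma_S=\{\gamma\in\Gamma:\gamma S=S\}$. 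The set $S\cap\La_\theta$ is closed, nonempty (since $\nu$ is supported on $\La_\theta$ and $\nu(S)>0$, and $\nu$ non-atomic forces $S\cap\La_\theta$ to be infinite, in particular to have at least two points), and $\Gamma_S$-invariant. Transporting through $f_\theta^{-1}$, we get a compact subset $W:=f_\theta^{-1}(S\cap\La_\theta)\subset\La_X\subset X$ with at least two points and $\Gamma_S W=W$. By Lemma \ref{lem.convact}, $\Gamma_S$ acts properly discontinuously on $X-W$. Now I invoke the antipodality hypothesis: for $\xi\in\La_X-W$, the point $f_{\i(\theta)}(\xi)$ is in general position with every point of $S\cap\La_\theta=f_\theta(W)$.

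The main obstacle — and the crux — is converting proper discontinuity of $\Gamma_S$ on $X-W$ into a contradiction with positivity of $(\nu\times\nu_\i)$ on $\La_\theta^{(2)}$ via ergodicity. The idea: the set $E:=\{(\eta_1,\eta_2)\in\La_\theta^{(2)}: \eta_1\in S\}=(S\cap\La_\theta)\times\La_{\i(\theta)}$ intersected with $\La_\theta^{(2)}$ has $(\nu\times\nu_\i)$-measure $\nu(S)\cdot\nu_\i(\La_{\i(\theta)})>0$ but is not $(\nu\times\nu_\i)$-conull, because its complement contains $(\La_\theta\setminus\bigcup_{\gamma}\gamma(S\cap\La_\theta))\times\cdots$, which I must show has positive measure; if the $\Gamma$-orbit of $S\cap\La_\theta$ covered $\La_\theta$ up to null sets I would instead use that the orbit $\{\gamma S\cap\La_\theta\}$, being $\nu$-essentially disjoint with uniformly positive mass after a compactness/Poincaré-recurrence argument, must be finite, so $\Gamma_S$ has finite index in $\Gamma$; but then $\Gamma_S$ is Zariski dense, acts properly discontinuously on the nonempty open set $X-W\supset\La_X-W\neq\emptyset$, and yet $\La_X-W$ contains limit points — more precisely $\La_X$ is the unique minimal set and $W\subsetneq\La_X$ would be a proper closed $\Gamma_S$-invariant subset of the infinite minimal set $\La_X$ (using $\#\La_X\ge 3$ from Zariski density and non-elementarity), contradicting minimality unless $W=\La_X$; and $W=\La_X$ forces $S\supset\La_\theta$, impossible for a proper subvariety since $\La_\theta$ is Zariski dense in $\F_\theta$ (Zariski density of $\Gamma$). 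Assembling: the only way to avoid all contradictions is $\nu(S)=0$. I would write this up by first establishing the dimension-minimality reduction cleanly, then the ``$\gamma S=S$ or essentially disjoint'' dichotomy, then the finite-index consequence via ergodicity, and finally the Lemma \ref{lem.convact} plus minimality-of-$\La_X$ endgame.
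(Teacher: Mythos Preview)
Your reduction to an irreducible $S$ of minimal dimension and the dichotomy ``$\gamma S=S$ or $\nu(\gamma S\cap S)=0$'' are correct and match the paper. The gap is the step where you conclude that $\Gamma_S=\{\gamma:\gamma S=S\}$ has finite index in $\Gamma$. Your justification --- that the translates $\gamma S$ are $\nu$-essentially disjoint and have ``uniformly positive mass after a compactness/Poincar\'e-recurrence argument'' --- does not go through: $\nu$ is only quasi-invariant, so the masses $\nu(\gamma S)$ can tend to zero along the orbit, and neither compactness of $\La_\theta$ nor Poincar\'e recurrence supplies a uniform lower bound. Without finite index, your endgame (Zariski density of $\Gamma_S$, or minimality of $\La_X$ under $\Gamma_S$) collapses. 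You also invoke Lemma~\ref{lem.convact} to say $\Gamma_S$ acts properly discontinuously on $X-W$, but then never use that conclusion; this is a sign the argument is not yet assembled.

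The paper avoids the finite-index issue entirely. After setting $W=f_\theta^{-1}(S\cap\La_\theta)$, it first upgrades the ergodicity hypothesis from $\La_\theta^{(2)}$ to the full product $(\La_\theta\times\La_{\i(\theta)},\nu\times\nu_\i)$ using antipodality and non-atomicity of $\nu$. It then picks a loxodromic $\gamma_0$ with attracting fixed point in $\La_X- W$, applies Lemma~\ref{lem.convact} at that point to get a neighborhood $U$ with $\gamma U\cap U=\emptyset$ for every nontrivial $\gamma\in\Gamma_W$, and finds $V\subset U$ with $\gamma_0 V\subset U$ and $\gamma_0 V\cap V=\emptyset$. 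Ergodicity on the product applied to the rectangle $S\times f_{\i(\theta)}(V)$ then produces some $\gamma\in\Gamma$ with $\nu(S\cap\gamma S)>0$ \emph{and} $\nu_\i\bigl(\gamma_0 f_{\i(\theta)}(V)\cap\gamma f_{\i(\theta)}(V)\bigr)>0$. The first forces $\gamma\in\Gamma_W$ by your dichotomy; the second forces $\gamma_0 V\cap\gamma V\neq\emptyset$, hence $U\cap\gamma U\neq\emptyset$, hence $\gamma=e$ by the choice of $U$; but $\gamma_0 V\cap V=\emptyset$ rules out $\gamma=e$. This is where the proper discontinuity from Lemma~\ref{lem.convact} is actually used, and it replaces your unproven finite-index step.
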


\begin{proof} We first claim that
the $\Ga$-action on $(\La_\theta \times \La_{\i(\theta)}, \nu \times \nu_{{\i}})$ is ergodic.   
Set
$R:=(\La_\theta \times\La_{\i(\theta)} ) - \La_\theta^{(2)}$.
Since  the $\Ga$-action on $(\La_{\theta}^{(2)}, 
    (\nu \times \nu_{\i})|_{\La_\theta^{(2)}})$ is ergodic,
it suffices to show that
 $$(\nu\times\nu_{{\i}})(R)=0.$$
For $y\in \La_{\i(\theta)}$, let
$R(y):=\{x\in\La_{\theta} : (x,y) \in R\}$.
By the antipodal property of the pair $(f_\theta, f_{\i(\theta)})$, we have that  for each $y\in\La_{\i(\theta)}$, 
we have $R(y)=\emptyset$ or $R(y)= \{(f_{\theta} \circ f_{\i(\theta)}^{-1})(y)\}$ and hence $\nu( R(y))=0$ 
by the non-atomicity of $\nu$. 

Therefore
\begin{equation}\label{eq.B}
(\nu\times\nu_{{\i}})(R)=\int_{y\in \La_{\i(\theta)}}\nu (R(y))\,d\nu_{\i}(y)=0,
\end{equation}
proving the claim.

Now suppose that $\nu(S) > 0$ for some proper algebraic subset $S \subset \F_{\theta}$. We may assume that $S$ is irreducible and
of minimal dimension among all such algebraic subsets of $\F_\theta$.
Let $W = f_{\theta}^{-1}(S\cap \La_\theta) \subset \La_X$. Since $\nu$ is non-atomic and $\nu(S) > 0$, we have $\# W = \infty > 2$.
This implies $\#\La_X\ge 3$. By the property of a non-elementary convergence group action, 
$\La_X$ is the unique $\Ga$-minimal subset of $X$ and there always exists a loxodromic element of $\Ga$ \cite{Bowditch1999convergence}.

Since $\Ga < G$ is Zariski dense, $\La_\theta$ is Zariski dense in $\F_\theta$ as well, and hence $\La_{\theta} \not\subset S$. Therefore $X - W$ is a non-empty open subset intersecting $\La_X$. Since $\Ga$ acts minimally on $\La_X$ and the set of attracting fixed points of loxodromic elements of $\Ga$ is a non-empty $\Ga$-invariant subset,
there exists a loxodromic element $\ga_0 \in \Ga$ whose attracting fixed point $a_{\ga_0}$ is contained in $\La_X- W$.
    Hence applying Lemma \ref{lem.convact} to $\eta=a_{\ga_0}$, we have an open neighborhood $U $ of $a_{\ga_0}$ in $\La_X$ such that 
    \be \label{eqn.Uchoice}
    \ga U \cap U = \emptyset
    \ee for all non-trivial $\ga \in \Ga$ with $\ga W = W$.

    Since $\ga_0^m|_{\La_X - \{b_{\ga_0} \}} \to a_{\ga_0}$ uniformly on compact subsets as $m \to  +\infty$ and $\#\La_X\ge 3$,
    $U$ contains a point $\xi\in \La_X-\{a_{\ga_0}, b_{\ga_0}\}$. By replacing $\ga_0$ by a large power $\ga_0^m$ if necessary,
    we can find an open neighborhood $V$ of $\xi$ contained in $U-\{a_{\ga_0}\}$
   such that $\ga_0 V \subset U$ and $\ga_0 V \cap V = \emptyset$.
   
    We now consider the subset $$S \times f_{\i(\theta)}(V)$$ of $\F_{\theta} \times \F_{\i(\theta)}$. Since $\nu(S) > 0$ and $\nu_{\i}(f_{\i(\theta)}(V)) > 0$, we have that $\Ga(S \times f_{\i(\theta)}(V))$ has full $\nu \times \nu_{\i}$-measure by the ergodicity of the $\Ga $-action on $(\La_{\theta} \times \La_{\i(\theta)}, \nu \times \nu_{\i})$.
    Since $(\nu \times \nu_{\i})(S \times \ga_0 f_{\i(\theta)}(V)) > 0$, there exists $\ga \in \Ga$ such that $$(\nu \times \nu_{\i})\left( (S \times \ga_0 f_{\i(\theta)}(V)) \cap (\ga S \times \ga f_{\i(\theta)}(V)\right) > 0.$$
    In particular, we have $$\nu(S \cap \ga S) > 0 \quad \mbox{and} \quad \nu_{\i}(\ga_0 f_{\i(\theta)}(V) \cap \ga f_{\i(\theta)}(V)) > 0.$$
    Since $S$ was chosen to be of minimal dimension and irreducible among proper algebraic sets with positive $\nu$-measure, we must have $S = \ga S$. It follows from the $\Ga$-invariance of $\La_{\theta}$ that $W = \ga W$.

    The $\Ga$-equivariance of $f_{\i(\theta)}$ implies that 
    \be \label{eqn.posmeas}
    \nu_{\i}(f_{\i(\theta)}(\ga_0 V \cap \ga V)) > 0.
    \ee
    Since $\ga_0 V \cap V = \emptyset$, we have $\ga \neq e$. Hence it follows from $V \subset U$, $\ga_0 V \subset U$ and the choice \eqref{eqn.Uchoice} of $U$ that $$\ga_0 V \cap \ga V \subset U \cap \ga U = \emptyset,$$ which gives a contradiction to \eqref{eqn.posmeas}. This finishes the proof.
\end{proof}

\subsection*{Proof of Theorem \ref{main}}
Let $\Ga < G$ be a Zariski dense $\theta$-transverse subgroup and $\nu$ a $(\Ga, \psi)$-Patterson-Sullivan measure for a $(\Ga, \theta)$-proper linear form $\psi \in \fa_{\theta}^*$ such that $\sum_{\ga \in \Ga} e^{-\psi(\mu_{\theta}(\ga))} = \infty$. We may assume without loss of generality that $\Ga$ is torsion-free. Indeed, let $\Ga_0 < \Ga$ be a torsion-free subgroup of finite index. Then $\Ga_0$ is also a Zariski dense $\theta$-transverse subgroup of $G$. Moreover, $\nu$ is a $(\Ga_0, \psi)$-Patterson-Sullivan measure since the limit sets for $\Ga$ and $\Ga_0$ are same. Write $\Ga = \bigcup_{i = 1}^n \ga_i \Ga_0$ for some $\ga_1, \cdots, \ga_n \in \Ga$. By \cite[Lemma 4.6]{Benoist1997proprietes}, there exists $C > 0$ such that $\| \mu(\ga_i \ga) - \mu(\ga)\| \le  C$ for all $\ga \in \Ga_0$ and $i = 1, \cdots n$. Hence we have that $\psi$ is $(\Ga_0, \theta)$-proper as well and $$\infty = \sum_{\ga \in \Ga} e^{-\psi(\mu_{\theta}(\ga))} = \sum_{i = 1}^n \sum_{\ga \in \Ga_0} e^{-\psi(\mu_{\theta}(\ga_i \ga))} \le  n e^{\|\psi\| C} \sum_{\ga \in \Ga_0} e^{-\psi(\mu_{\theta}(\ga))}$$
where $\|\psi\|$ denotes the operator norm of $\psi$.
In particular, $\sum_{\ga \in \Ga_0} e^{-\psi(\mu_{\theta}(\ga))} = \infty$. Therefore, replacing $\Ga$ by $\Ga_0$, we assume that $\Ga$ is torsion-free. By Proposition \ref{prop.transisconv}, the action of $\Ga$ on $\La_{\theta \cup \i(\theta)}$ is a convergence group action.

Since there exists  a $(\Ga, \psi)$-conformal measure, we have $\delta_{\psi} \le 1$ by \cite[Lemma 7.3]{kim2023growth}. Therefore the hypothesis $\sum_{\ga \in \Ga} e^{-\psi(\mu_{\theta}(\ga))}  = \infty$ implies that $\psi \in \cal D_{\Ga}^{\theta}$. Moreover, the $\theta$-antipodality of $\Gamma$ implies that
the canonical projections $$f_{\theta} : \La_{\theta \cup \i(\theta)} \to \La_{\theta} \quad \text{and} \quad f_{\i(\theta)} : \La_{\theta \cup \i(\theta)} \to \La_{\i(\theta)}$$ are $\Ga$-equivariant $\theta$-antipodal  homeomorphisms \cite[Lemma 9.5]{kim2023growth}.
This implies that  Theorem \ref{na} indeed holds for a general $\theta$ without the hypothesis $\theta=\i(\theta)$.
Hence $\nu=\nu_\psi$, $\nu_\psi$ is non-atomic and the diagonal $\Ga$-action on $(\La_{\theta}^{(2)}, (\nu_\psi \times \nu_{\psi\circ \i})|_{\La_{\theta}^{(2)}})$ is ergodic. Since $\nu_{\psi \circ \i}$ is $\Ga$-conformal, it is $\Ga$-quasi-invariant. Therefore Theorem \ref{main} follows from Proposition \ref{prop.torsionfree}.

\medskip

We emphasize again that Lemma \ref{cont} and Proposition \ref{tf} were
introduced to deal with the case when $\i$ is non-trivial.
Indeed, when $\i$ is trivial,  Theorem
\ref{main} follows from the following $\theta$-version of \cite[Theorem 9.3]{edwards2022torus}.
\begin{theorem} \label{final}
    Let $\Ga < G$ be a Zariski dense discrete subgroup. Let $\nu$ be a $\Ga$-quasi-invariant measure on $\La_{\theta}$. Suppose that the diagonal $\Ga$-action on $(\La_{\theta} \times \La_{\theta}, \nu \times \nu)$ is ergodic. Then for any proper algebraic subset $S$ of $\F_{\theta}$, we have $$\nu(S) = 0.$$
\end{theorem}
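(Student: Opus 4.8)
\textbf{Proof proposal for Theorem \ref{final}.}

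The plan is to argue by contradiction, mimicking the structure of the proof of Proposition \ref{tf} but now exploiting that $\theta = \i(\theta)$ means we only deal with a single limit set $\La_\theta$ and a single factor. Suppose $\nu(S) > 0$ for some proper algebraic subset $S \subsetneq \F_\theta$, and among all such $S$ choose one that is irreducible and of minimal dimension; by minimality, any proper algebraic subset with positive $\nu$-measure must, if it meets $S$ in lower dimension, have $\nu$-measure zero, and two such minimal-dimensional irreducible sets with $\nu(S\cap S')>0$ must coincide. The first key step is to produce, from the ergodicity of the diagonal $\Ga$-action on $(\La_\theta\times\La_\theta,\nu\times\nu)$, the fact that $\Ga(S\times S)$ has full $\nu\times\nu$-measure, so that for a suitable open set $O\subset\F_\theta$ with $\nu(O)>0$ one gets $\ga\in\Ga$ with $(\nu\times\nu)\big((S\times O)\cap(\ga S\times \ga O)\big) > 0$, forcing $\nu(S\cap\ga S)>0$ and hence $S = \ga S$ by minimality, while simultaneously $\nu(O\cap\ga O)>0$.

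The second, and I expect the main, step is to choose the auxiliary open set $O$ so cleverly that $S = \ga S$ together with $\nu(O\cap\ga O)>0$ forces $\ga$ to lie in a subgroup small enough to derive a contradiction with Zariski density. The natural device is the stabilizer $\stab_\Ga(S) = \{\ga\in\Ga:\ga S = S\}$. One shows this is a proper algebraic (in fact, its Zariski closure is a proper) subgroup of $\overline{\Ga}^{\,Z} = G$: indeed $\stab_\Ga(S)$ preserves $S$, so its Zariski closure preserves $S$, and since $G$ acts with Zariski-dense orbits on $\F_\theta$ (as $\La_\theta$, hence any $\Ga$-orbit, is Zariski dense), no proper algebraic subset of $\F_\theta$ is $G$-invariant; thus $\overline{\stab_\Ga(S)}^{\,Z}\ne G$. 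Consequently $\La_\theta\not\subset \overline{\stab_\Ga(S)}^{\,Z}\cdot\xi$ for a generic point, and more usefully the $\stab_\Ga(S)$-orbit of a generic point of $\La_\theta$ is not all of $\La_\theta$; using that $\Ga$ acts on $\La_\theta$ minimally, one finds $\xi_0\in\La_\theta$ and an open neighborhood $O$ of $\xi_0$ in $\F_\theta$ with $\nu(O)>0$ (non-atomicity is not even needed here, only $\xi_0\in\mathrm{supp}\,\nu$) such that $\ga O\cap O=\emptyset$ for every $\ga\in\stab_\Ga(S)$ with $\ga O\ne O$ — the properness of the action of $\stab_\Ga(S)$ on an appropriate piece. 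Then the conclusion $S = \ga S$, i.e. $\ga\in\stab_\Ga(S)$, combined with $\nu(O\cap\ga O)>0$, forces $\ga O\cap O\ne\emptyset$, hence $\ga O = O$; iterating or refining the choice of $O$ so that no nontrivial element of $\stab_\Ga(S)$ fixes $O$ setwise yields $\ga = e$, but then $S\times O$ and $\ga_0 S\times \ga_0 O$ were already disjoint in the second coordinate by construction (choosing a translate $\ga_0 O$ disjoint from $O$, which is possible since $\Ga$ is infinite and $\La_\theta$ infinite), a contradiction.

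The precise mechanism for getting an open set on which $\stab_\Ga(S)$ acts freely/discretely is where care is required: unlike in Proposition \ref{tf}, here we do not have a convergence group action literally in hand for an abstract $X$, but we do have $\Ga$ acting on $\La_{\theta\cup\i(\theta)} = \La_\theta$ (since $\theta=\i(\theta)$) as a convergence group only when $\Ga$ is $\theta$-transverse — and the hypothesis of Theorem \ref{final} does not assume transversality. So I would instead follow the algebraic route of \cite[Theorem 9.3]{edwards2022torus}: work with the diagonal in $\F_\theta\times\F_\theta$, use the fact that a proper subvariety $S$ invariant under $\ga$ constrains $\ga$ to lie in the stabilizer of $[S]$ in a suitable Chow-type variety, and leverage that such stabilizers are algebraic subgroups of $G$ of positive codimension, contradicting $\overline{\Ga}^{\,Z}=G$ once one knows $\ga$ ranges over a Zariski-dense (or at least not-too-small) subset. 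Concretely: after establishing that $\ga_*\nu$ and $\nu$ are mutually absolutely continuous and $S=\ga S$ for $\nu\times\nu$-almost every pair witnessing the ergodic orbit, one gets that the set $\{\ga\in\Ga:\ga S=S\}$ has the property that $\Ga$ is covered by finitely many cosets of it (by ergodicity pushing $S$ around), whence $\stab_\Ga(S)$ has finite index in $\Ga$, so its Zariski closure is all of $G$, so $S$ is $G$-invariant — impossible for a proper subvariety of the flag variety $\F_\theta$. The hard part is thus pinning down that ergodicity upgrades "$\Ga(S\times S)$ has full measure" to "$\stab_\Ga(S)$ has finite index", which is exactly the argument of \cite[Theorem 9.3]{edwards2022torus} adapted verbatim to the $\fa_\theta$-setting, the point being that the partition of $\La_\theta\times\La_\theta$ into the $\Ga$-translates of $S\times S$ and its complement is $\Ga$-invariant, so by ergodicity one of these has full measure, and full-measure of $\Ga(S\times S)$ combined with minimality of $\dim S$ yields the coset-covering.
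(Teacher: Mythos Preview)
Your proposal misses the one-line trick that makes this theorem easy, and neither of your suggested routes actually closes the argument.

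The paper's proof does not use an auxiliary open set $O$, nor a convergence action, nor any finite-index or Chow-variety considerations. Once $\Ga(S\times S)$ has full $\nu\times\nu$-measure, one simply tests it against the set $S\times\ga_0 S$ for an \emph{arbitrary} $\ga_0\in\Ga$: since $(\nu\times\nu)(S\times\ga_0 S)>0$, there is $\ga\in\Ga$ with $(\nu\times\nu)\big((S\times\ga_0 S)\cap(\ga S\times\ga S)\big)>0$, whence $\nu(S\cap\ga S)>0$ \emph{and} $\nu(\ga_0 S\cap\ga S)>0$. Minimality and irreducibility of $S$ force $S=\ga S=\ga_0 S$. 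As $\ga_0$ was arbitrary, $\Ga S=S$, contradicting Zariski density. The entire point of having the \emph{same} measure $\nu$ on both factors is that one may put $S$ (rather than an open set) in the second slot, and this immediately gives $\stab_\Ga(S)=\Ga$, not merely finite index.

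Your first route via an open $O$ on which $\stab_\Ga(S)$ acts properly cannot be made to work here: as you yourself note, no convergence action is assumed, and there is no general reason the stabilizer of $S$ should act properly discontinuously anywhere on $\La_\theta$. Your second route invokes \cite{edwards2022torus} but mis-describes its content: there is no coset-covering or finite-index step there, only the two-line computation above. Your sketch (``ergodicity upgrades full measure of $\Ga(S\times S)$ to finite index of the stabilizer'') is not an argument, and the mechanism you propose for it is not correct. The missing idea is precisely to replace your $O$ by $\ga_0 S$.
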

\begin{proof} The proof is identical to the proof of \cite[Theorem 9.3]{edwards2022torus} except that we work with a general $\theta$. We reproduce it here for the convenience of readers.
Let $S$ be a proper irreducible subvariety of $\F_\theta$ with $\nu(S)>0$
and of minimal dimension.  Since $(\nu\times \nu) (S\times  S)>0$,  the $\G$-ergodicity of $\nu\times \nu$ implies that
 $(\nu\times \nu)(\Ga(S\times S))=1$. It follows that for any $\ga_0\in \Ga$,
 there  exists $\ga\in \Ga$ such that
 $(S \times \ga_0S)\cap (\ga S\times \ga S)$ has positive $\nu\times \nu$-measure; hence $\nu(S\cap \ga S)>0$ and
$\nu(\ga_0 S\cap \ga S)>0$. Since $S$ is irreducible and of minimal dimension, it follows that $S=\ga S=\ga_0 S$.
Since $\ga_0\in \Ga$ was arbitrary, we have $\Gamma S=S$, contradicting the Zariski density hypothesis on $\Ga$.
\end{proof}

We finally mention  that the proof of Proposition \ref{prop.torsionfree}
implies the following when the second measure cannot be taken to the same as the first measure:
\begin{theorem} 
    Let $\Ga < G$ be a Zariski dense torsion-free discrete subgroup acting on $\La_\theta$ as a convergence
    group. Let $\nu$ be a non-atomic $\Ga$-quasi-invariant measure on $\La_{\theta}$. Suppose that the diagonal $\Ga$-action on $(\La_{\theta} \times \La_{\theta}, \nu \times \nu')$ is ergodic for some $\Ga$-quasi-invariant measure $\nu'$ on $\La_\theta$. Then for any proper algebraic subset $S$ of $\F_{\theta}$, we have $$\nu(S) = 0.$$
\end{theorem}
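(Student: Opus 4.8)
The plan is to follow the proof of Proposition \ref{prop.torsionfree} almost verbatim, observing that only the following ingredients from that argument are actually used: non-atomicity of $\nu$, ergodicity of the $\Ga$-action on a product measure $\nu\times\nu'$, the convergence group action on the relevant limit set, and the existence of loxodromic elements with prescribed attracting fixed points. First I would unwind the roles of $\La_X$, $f_\theta$, $f_{\i(\theta)}$ in that proof: here $X=\La_\theta$ itself carries the convergence action, and $\La_X=\La_\theta$ since $\Ga$ acts minimally on its limit set; the maps $f_\theta, f_{\i(\theta)}$ are replaced by the identity, and $\nu_{\i}$ by $\nu'$. The $\theta$-antipodality hypothesis in Proposition \ref{prop.torsionfree} was used only to show that the "diagonal-type" bad set $R$ has measure zero, so that ergodicity on $\La_\theta^{(2)}$ upgrades to ergodicity on the full product; in the present statement ergodicity on the full product $(\La_\theta\times\La_\theta,\nu\times\nu')$ is assumed outright, so this reduction step is simply skipped.

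With that dictionary in place, the core argument runs as follows. Suppose $\nu(S)>0$ for some proper algebraic $S\subset\F_\theta$, and take $S$ irreducible of minimal dimension among such sets. Put $W=S\cap\La_\theta\subset\La_\theta$; since $\nu$ is non-atomic and $\nu(S)>0$, $W$ is infinite, so in particular $\#\La_\theta\ge 3$ and the convergence action is non-elementary, hence $\La_\theta$ is the unique minimal set and loxodromic elements exist. Zariski density of $\Ga$ forces $\La_\theta\not\subset S$, so $\La_\theta-W$ is non-empty and open in $\La_\theta$; by minimality and $\Ga$-invariance of the set of attracting fixed points, some loxodromic $\ga_0$ has $a_{\ga_0}\in\La_\theta-W$. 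Apply Lemma \ref{cont} with $\eta=a_{\ga_0}$ to get a neighborhood $U$ of $a_{\ga_0}$ with $\ga U\cap U=\emptyset$ for all non-trivial $\ga$ with $\ga W=W$. Using the north–south dynamics of $\ga_0$ and $\#\La_\theta\ge 3$, shrink to find $\xi\in U-\{a_{\ga_0},b_{\ga_0}\}$ and a neighborhood $V\ni\xi$ with $V\subset U-\{a_{\ga_0}\}$, $\ga_0 V\subset U$ and $\ga_0 V\cap V=\emptyset$ (replacing $\ga_0$ by a high power if needed). Then $\nu(S)>0$ and $\nu'(V)>0$, so by ergodicity of $\nu\times\nu'$ the set $\Ga(S\times V)$ is conull; comparing with $S\times\ga_0 V$ (of positive measure) yields $\ga\in\Ga$ with $\nu(S\cap\ga S)>0$ and $\nu'(\ga_0 V\cap\ga V)>0$. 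Minimality and irreducibility of $S$ give $S=\ga S$, hence $\ga W=W$; since $\ga_0 V\cap V=\emptyset$ we have $\ga\ne e$, so $\ga_0 V\cap\ga V\subset U\cap\ga U=\emptyset$, contradicting $\nu'(\ga_0 V\cap\ga V)>0$.

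The only place where one must be slightly careful — and the step I would flag as the main point to get right — is the hypothesis bookkeeping: in Proposition \ref{prop.torsionfree} the convergence action is on $\La_X\subset X$ with $X$ possibly larger, whereas here it is directly on $\La_\theta$, and one must confirm that $\nu'$ being merely $\Ga$-quasi-invariant (not conformal) still suffices — it does, because only quasi-invariance, i.e. preservation of null sets and of "positive measure", enters the argument. No step of the original proof uses conformality of $\nu_{\i}$ or the specific origin of the measures, so the verbatim transcription goes through; there is no genuine new obstacle, which is exactly why this is stated as a remark rather than proved in detail.

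\begin{proof}
This is a transcription of the proof of Proposition \ref{prop.torsionfree}, in which the convergence group action is now directly on $\La_\theta$, the $\Ga$-equivariant homeomorphisms $f_\theta, f_{\i(\theta)}$ are replaced by the identity, and $\nu_{\i}$ is replaced by $\nu'$; the reduction showing that the complement of $\La_\theta^{(2)}$ is $\nu\times\nu_{\i}$-null is omitted, since ergodicity on the full product $(\La_\theta\times\La_\theta,\nu\times\nu')$ is assumed.

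Suppose that $\nu(S)>0$ for some proper algebraic subset $S\subset\F_\theta$; choose $S$ irreducible of minimal dimension among all such subsets. Set $W=S\cap\La_\theta\subset\La_\theta$. Since $\nu$ is non-atomic and $\nu(S)>0$, we have $\#W=\infty>2$, so $\#\La_\theta\ge 3$ and the $\Ga$-action on $\La_\theta$ is non-elementary; hence $\La_\theta$ is the unique $\Ga$-minimal subset and $\Ga$ contains a loxodromic element \cite{Bowditch1999convergence}. Since $\Ga<G$ is Zariski dense, $\La_\theta$ is Zariski dense in $\F_\theta$, so $\La_\theta\not\subset S$, and $\La_\theta-W$ is a non-empty open subset of $\La_\theta$. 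As $\Ga$ acts minimally on $\La_\theta$ and the set of attracting fixed points of loxodromic elements is a non-empty $\Ga$-invariant subset, there is a loxodromic $\ga_0\in\Ga$ with $a_{\ga_0}\in\La_\theta-W$. Applying Lemma \ref{lem.convact} with $\eta=a_{\ga_0}$, we obtain an open neighborhood $U$ of $a_{\ga_0}$ in $\La_\theta$ such that
\be \label{eqn.Uchoice2}
\ga U\cap U=\emptyset
\ee
for all non-trivial $\ga\in\Ga$ with $\ga W=W$.

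Since $\ga_0^m|_{\La_\theta-\{b_{\ga_0}\}}\to a_{\ga_0}$ uniformly on compact subsets as $m\to+\infty$ and $\#\La_\theta\ge 3$, the set $U$ contains a point $\xi\in\La_\theta-\{a_{\ga_0},b_{\ga_0}\}$. Replacing $\ga_0$ by a large power $\ga_0^m$ if necessary, we find an open neighborhood $V$ of $\xi$ contained in $U-\{a_{\ga_0}\}$ such that $\ga_0 V\subset U$ and $\ga_0 V\cap V=\emptyset$.

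We now consider the subset $S\times V$ of $\F_\theta\times\F_\theta$. Since $\nu(S)>0$ and $\nu'(V)>0$, the set $\Ga(S\times V)$ has full $\nu\times\nu'$-measure by the ergodicity of the $\Ga$-action on $(\La_\theta\times\La_\theta,\nu\times\nu')$. Since $(\nu\times\nu')(S\times\ga_0 V)>0$, there exists $\ga\in\Ga$ such that
$$(\nu\times\nu')\left((S\times\ga_0 V)\cap(\ga S\times\ga V)\right)>0.$$
In particular, $\nu(S\cap\ga S)>0$ and $\nu'(\ga_0 V\cap\ga V)>0$. Since $S$ was chosen irreducible of minimal dimension among proper algebraic subsets of positive $\nu$-measure, we must have $S=\ga S$, and the $\Ga$-invariance of $\La_\theta$ gives $W=\ga W$. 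Since $\ga_0 V\cap V=\emptyset$ while $\nu'(\ga_0 V\cap\ga V)>0$, we have $\ga\ne e$. Hence by $V\subset U$, $\ga_0 V\subset U$ and \eqref{eqn.Uchoice2},
$$\ga_0 V\cap\ga V\subset U\cap\ga U=\emptyset,$$
contradicting $\nu'(\ga_0 V\cap\ga V)>0$. This completes the proof.
\end{proof}
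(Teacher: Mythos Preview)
Your proof is correct and follows exactly the same approach as the paper's own proof, which explicitly notes that the reduction step at the start of Proposition~\ref{prop.torsionfree} is unnecessary here (since ergodicity on the full product is assumed) and then refers back to that proof for the remainder. Your write-up simply spells out in full detail what the paper abbreviates as ``the same argument in loc.\ cit.''
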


\begin{proof}
    Since $\Ga$ acts ergodically on the entire {\it product space} $(\La_{\theta} \times \La_{\theta}, \nu \times \nu')$, the first part of the proof of Proposition \ref{prop.torsionfree} is not relevant.
Suppose that $S$ is an irreducible proper subvariety of $\cal F_\theta$ and
of minimal dimension among all subvarieties with positive $\nu$-measure. Then setting $W = S \cap \La_{\theta}$, 
 as in the proof of  Proposition \ref{prop.torsionfree},
 we can find non-empty open subsets $V\subset U\subset \La_\theta- W$ such that $
    \ga U \cap U = \emptyset$ for all non-trivial $\ga \in \Ga$ with $\ga W = W$,
    and $\ga_0V\subset U$ and $\ga_0V\cap V=\emptyset$
    for some $\ga_0\in \Ga$.
Using $(\nu\times \nu')(S\times V)>0$, we then get a contradiction by the same argument in loc. cit.
\end{proof}

\bibliographystyle{plain} 
%\bibliography{NC}

\end{document}